\newtheorem{theorem}{Theorem}[section]
\newtheorem{lemma}[theorem]{Lemma}
\newcommand{\Supp}{\operatorname{Supp} }
\newcommand{\lt}{\overline}
\newcommand{\lc}{\operatorname{lc}}
\newcommand{\Irr}{\operatorname{Irr}}
\newcommand{\IFF}{if and only if}
\newcommand{\NN}{\mathbb{N}}
\newcommand{\ZZ}{\mathbb{Z}}
\newcommand{\gsb}{Gr\"obner-Shirshov basis}
\newcommand{\gsbs}{Gr\"obner-Shirshov bases}
\title{Jacobson's lemma via Gr\"obner-Shirshov bases}
\author{Xiangui Zhao\\
\small Department of Mathematics,
Huizhou University\\
\small Huizhou, Guangdong Province 516007, China\\
\small zhaoxg@hzu.edu.cn}
\date{}                                           % Activate to display a given date or no date
\begin{document}
\maketitle
\textbf{Abstract.} Let $R$ be a ring with identity $1$.
{Jacobson's lemma} states that for any $a,b\in R$, if $1-ab$ is invertible then so is $1-ba$.
Jacobson's lemma has suitable analogues for several types of generalized inverses, e.g., Drazin inverse,
generalized Drazin inverse, and inner inverse.
In this note we give a constructive way via Gr\"obner-Shirshov basis theory to obtain the inverse of $1-ab$ in terms of $(1-ba)^{-1}$, assuming the latter exists.
%%%%%%%%%%%%

\textbf{2010 Mathematics Subject Classification:}
15A09, %Matrix inversion, generalized inverses
13P10,% Groebner bases; other bases for ideals and modules (e.g., Janet and border bases)

\textbf{Keyword:} inverse, \gsb, Jacobson's lemma
%%%%%%%%%%%
\section{Introduction}
Let $R$ be a ring with identity $1$.
Recall that an element $x\in R$ is \emph{invertible (in $R$)} if there exists $y\in R$ such that $xy=yx=1$.
{Jacobson's lemma} \cite{lam2014jacobson,corach2013extensions} states that for any $a,b\in R$, if $1-ab$ is invertible then so is $1-ba$.
Jacobson's lemma has suitable analogues for several types of generalized inverses, e.g., Drazin inverse \cite{castro2010generalized, cvetkovic2010jacobson},
generalized Drazin inverse \cite{zhuang2012jacobson}, and inner inverse \cite{chen2010unit}.

Suppose $a, b\in R$ and $1-ab$ is invertible.
It is easy to verify that
\begin{align*}
(1 + b (1 - ab)^{-1}a)(1-ba)=1-b(1-(1-ab)^{-1}+(1-ab)^{-1}ab)a=1-b0a=1\\
(1-ba)(1 + b (1 - ab)^{-1}a)=1+b((1-ab)^{-1}-1-ab(1-ab)^{-1}a)a=1+b0a=1
\end{align*}
 and then Jacobson's lemma follows.
But, how is this formula constructed?
As Halmos \cite{halmos1981does} observed, $(1-ba)^{-1}$ can be ``constructed'' by using the following ``geometric series trick'',
which is ``usually ascribed to Jacobson'' \cite{lam2014jacobson},
\begin{align}\label{formula_Jacabson}
  (1-ba)^{-1}&=1 + ba + baba + bababa + \cdots\nonumber \\
  &=1 + b (1 + ab + abab + \cdots) a\nonumber \\
  &= 1 + b (1 - ab)^{-1}a.
\end{align}
Once the inverse is ``constructed'', it is easy to verify that Formula (\ref{formula_Jacabson}) works as above, despite its unlawful derivation.

Now it is natural to ask: Does there exist a legal way to construct the inverse of $1-ba$?
We shall answer this question in this note.

In this note we give a constructive way via \gsb\ theory to obtain the inverse of $1-ab$ in terms of $(1-ba)^{-1}$, assuming the latter exists.
A similar method is used in \cite{herfort2014grobner} to obtain Hua's identity.

%[sketch of method...]
\section{\gsbs\ for algebras over a unitary ring}
In this section, we briefly review several properties of \gsb\ for algebras over a unitary commutative ring.

Let $X\neq\emptyset$ and $k$ be a unitary ring.
Let $X^*$ be the free monoid generated by $X$ and $A=k\langle X\rangle =kX^*$ be the free algebra over $k$ with generator set $X$.
Similar to the case of associative algebras over a field (see \cite{bokut2014grobner}, c.f., \cite{cox2005using,zhangzhao2013Modules}),
we have concepts of \emph{monomial ordering},
\emph{leading term/coefficient}, \emph{composition}, and \emph{\gsb}.
Particularly, a \gsb\ consists of only \emph{monic polynomials} (i.e., polynomials with leading coefficient $1$).
For convenience, we define the leading term of a nonzero element from $k$ to be $1$ and the leading term of $0$ to be $0$.
Fixing a monomial ordering, we denote the leading term and leading coefficient of $f\in A$ by $\lt{f}$ and $\lc(f)$ respectively.

Given $f=\alpha_1u_1+\cdots\alpha_nu_n\in A$, where $n\in \NN$, each $\alpha_i\in k$, $u_i\in X^*$, and $\alpha_i\neq0$,
the \emph{support} of $f$ is $\Supp(f)=\{u_1,\ldots,u_n\}$.
Let $f,h, g\in A$, where $g$ is a monic polynomial.
Then \emph{$f$ reduces to $h$ modulo $g$},
denoted by $f\rightarrow_g h$, if $\lt{f}=\lt{agb}$ for some $a,b\in X^*$,
and $h=f-\lc(f) agb$.
For a set $G$ consisting of monic polynomials,
we say that \emph{$f$ reduces to $h$ modulo $G$},
denoted by  $f\rightarrow_G {h}$, if there exists a finite chain of reductions
\[
f\rightarrow_{g_1} f_1\rightarrow_{g_2} f_2\rightarrow_{g_3}\cdots \rightarrow_{g_t} f_t=h,
\]
where each $g_i\in G$ and $t\in \NN$.
Denote the set of words that are {\it irreducible} w.r.t. $G$ by $\Irr(G)$, i.e.,
\[
\Irr(G)=\{w\in X^*:w\not=a\lt{g}b \mbox{ for any }g\in G, a,b\in X^*\}.
\]
If {${\Supp(h)}\subseteq \Irr(G)$}, then we say $h$ is  \emph{irreducible} w.r.t. $G$.

The following lemma follows from the composition-diamond lemma for associative algebras over a commutative ring (\cite{bokut2014grobner}, Theorem 1 and Remark 2).
\begin{lemma}\label{lemma-GSB-reduction}
  Suppose $G$ is a \gsb\ for $I\unlhd A$, where $I$ is the ideal of $A$ generated by $G$.
  Then, for any $f\in A$, $f\in I$ \IFF\ $f\to_G0$.
\end{lemma}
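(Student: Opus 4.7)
The plan is to deduce the lemma directly from the cited composition-diamond lemma for associative algebras over a commutative ring, treating the two implications separately.

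For the easy direction $f \to_G 0 \Rightarrow f \in I$, no Gr\"obner-Shirshov hypothesis is actually needed. Unpacking the definition of reduction, each step $f_{i-1} \to_{g_i} f_i$ subtracts from $f_{i-1}$ a two-sided multiple $\lc(f_{i-1})\, a_i g_i b_i$ with $a_i,b_i \in X^*$ and $g_i \in G$. Telescoping the chain $f = f_0 \to_{g_1} f_1 \to \cdots \to_{g_t} f_t = 0$ exhibits $f$ as an explicit finite $A$-bilinear combination of elements of $G$, hence as a member of the two-sided ideal $I$ they generate. I would write this out in a single displayed identity and call it done.

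For the converse, I would invoke (\cite{bokut2014grobner}, Theorem 1 and Remark 2) as a black box. Specialised to the present setup, the composition-diamond lemma states exactly that $G$ being a \gsb\ of $I$ is equivalent to the property that every element of $I$ reduces to $0$ modulo $G$, so the implication follows by definition chasing. The intuitive picture is the standard one: well-foundedness of the monomial ordering guarantees that reduction of any $f \in A$ terminates in some $h$ with $\Supp(h) \subseteq \Irr(G)$; the Gr\"obner-Shirshov hypothesis---that every composition of elements of $G$ reduces to $0$ modulo $G$---then forces $h = 0$ whenever $f \in I$, because $\Irr(G)$ descends to a $k$-basis of $A/I$.

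The only real obstacle lies in this second direction, and it is entirely absorbed by the cited theorem; the subtle point handled there, beyond the classical field case, is the uniqueness-of-normal-form argument when the coefficient ring $k$ is merely commutative (so one cannot rescale leading coefficients freely). Since the present note will apply the lemma only to extract an explicit inverse from a concrete \gsb, I would not attempt to reprove the underlying composition-diamond result; a one-sentence reference, together with the telescoping calculation above, should suffice.
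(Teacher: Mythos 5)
Your proposal is correct and matches the paper's treatment: the paper offers no separate proof of this lemma, deriving it directly from the cited composition-diamond lemma (\cite{bokut2014grobner}, Theorem 1 and Remark 2), exactly as you do for the nontrivial direction. Your explicit telescoping argument for the direction $f\to_G 0 \Rightarrow f\in I$ is a harmless (and correct) elaboration of what the paper leaves implicit.
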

%\begin{proof}
%  ($\Rightarrow$) Suppose $f\in I$. Then by
%\end{proof}
\section{A constructive method to Jacobson's lemma}
%GSB over integers \cite{mikhalev1998standard} (no need)

% GSB 2014 survey \cite{bokut2014grobner}: GSB for algebras over a commutative ring with identity.

Let $R$ be a unitary ring.
Given $a,b\in R$ with $1-ab$ invertible (say, $c=(1-ab)^{-1}$),
we give a constructive method in this section to find the inverse of $1-ba$.

Let $S$ be the $\ZZ$-algebra generated by $\{a,b,c\}$ with defining relations
$\{(1-ab)c=1, c(1-ab)=1\}$, i.e.,
\(
S=
\ZZ\langle a,b,c: (1-ab)c=1, c(1-ab)=1\rangle.
\)
We %want to find an element $x\in S$ such that $(1-ba)x=1$ and $x(1-ba)=1$,
%i.e., we
want to solve the following system for $x$ in $S$:
\begin{align}\label{system}
 (1-ba)x=1\nonumber\\
  x(1-ba)=1.
\end{align}
It is obvious that the subring of $R$ generated by $\{a,b,c\}$ is a ring homomorphic image of $S$. Thus the image of $x$ is the inverse of $1-ba$ in $R$.

%Fix a monomial ordering on $\{a,b,c\}^*$:
Let $<$ be the degree-lexicographic ordering on $\{a,b,c\}^*$ with $a<b<c$.
Denote $f=abc-c+1$, $g=cab-c+1$, and $G=\{abc-c+1, cab-c+1\}=\{f,g\}$.

\

The following two lemmas will be used in the proof of our main theorem.
\begin{lemma}
  The set $G$ is a \gsb\ (in $\ZZ\langle a,b,c\rangle$) for $S$ w.r.t. $<$.
\end{lemma}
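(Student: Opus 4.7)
The plan is to invoke the composition-diamond lemma, which reduces the claim to checking that every nontrivial composition of elements of $G$ reduces to zero modulo $G$. First I identify the leading terms under the degree-lexicographic ordering with $a<b<c$: $\lt{f}=abc$ and $\lt{g}=cab$, both of degree three.

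Next I enumerate the ambiguities. Since neither $abc$ nor $cab$ is a factor of the other, no inclusion compositions arise. For self-intersections, no proper suffix of $abc$ equals any prefix of $abc$, and likewise for $cab$; hence the self-compositions of $f$ with $f$ and of $g$ with $g$ are trivial. Only intersection compositions between $f$ and $g$ remain, and inspection reveals exactly two overlaps: the suffix $c$ of $\lt{f}$ matches the prefix $c$ of $\lt{g}$, giving the ambiguity word $w_1=abcab$; and the suffix $ab$ of $\lt{g}$ matches the prefix $ab$ of $\lt{f}$, giving $w_2=cabc$.

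For each of these ambiguities I form the corresponding composition and reduce it modulo $G$. A direct computation yields $(f,g)_{w_1}=f\cdot ab-ab\cdot g=abc-cab$, which reduces via $f$ to $(c-1)-cab$ and then via $g$ to $0$; and $(g,f)_{w_2}=g\cdot c-c\cdot f$ vanishes identically. Thus every composition reduces to zero, and by the composition-diamond lemma (cited just before Lemma \ref{lemma-GSB-reduction}) $G$ is a \gsb\ for the ideal it generates in $\ZZ\langle a,b,c\rangle$, and hence for $S$. The main---quite modest---obstacle is simply ensuring the overlap enumeration is exhaustive; once that is verified, the remaining reductions are short polynomial manipulations.
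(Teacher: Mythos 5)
Your proposal is correct and follows essentially the same route as the paper: identify the two intersection compositions at the ambiguity words $abcab$ and $cabc$ (noting, as you do explicitly, that there are no inclusion or self-intersection ambiguities), and check that $f\cdot ab-ab\cdot g=abc-cab$ reduces to $0$ while $g\cdot c-c\cdot f$ vanishes identically. The only cosmetic difference is the order of the two reduction steps for the first composition (the paper rewrites the leading term $cab$ via $g$ first, then $abc$ via $f$), which does not affect the conclusion.
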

\begin{proof}
  There exist only  two compositions (both are intersection compositions) in $G$, i.e.,
  \begin{align*}
    fab-abg=(abc-c+1)ab-ab(cab-c+1)=-cab+abc\rightarrow_G 0
  \end{align*}
and
\begin{align*}
    gc-cf=(cab-c+1)c-c(abc-c+1)=0.
  \end{align*}
Hence $G$ is a \gsb.
\end{proof}
\begin{lemma}\label{lemma_leading-reducible}
  Suppose $\sum_{i=1}^n\alpha_i uv_i\to_G0$, where
  $1\leq n\in\NN$,  $0\neq\alpha_i\in \ZZ$, $u, v_i\in\Irr(G)$ for all $1\leq i\leq n$ and
  $v_1>v_2>\cdots>v_n$.
  Then $uv_1\not\in \Irr(G)$.
\end{lemma}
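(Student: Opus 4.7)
The plan is to argue directly from the definition of single-step reduction: any reduction $f\to_g h$ requires the leading monomial of $f$ to be of the form $a\lt{g}b$ for some $a,b\in\{a,b,c\}^*$. Consequently, if a nonzero polynomial $p$ reduces to $0$ modulo $G$, then its leading monomial must equal $a\lt{g}b$ for some $g\in G$, i.e.\ must lie outside $\Irr(G)$. The lemma therefore reduces to identifying the leading monomial of $p:=\sum_{i=1}^{n}\alpha_i uv_i$ as precisely $uv_1$.

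First I would pin down $\lt{p}$. Since the $v_i$ are pairwise distinct words and the free monoid $\{a,b,c\}^*$ is left-cancellative, the monomials $uv_1,\ldots,uv_n$ are pairwise distinct and form the support of $p$. Because $<$ is a monomial order, the chain $v_1>v_2>\cdots>v_n$ is preserved under left multiplication by $u$, giving $uv_1>uv_2>\cdots>uv_n$. Combined with $\alpha_1\ne 0$, this yields $\lt{p}=uv_1$ and $\lc(p)=\alpha_1$; in particular, $p\ne 0$.

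Next I would invoke the hypothesis $p\to_G 0$. Since $p\ne 0$, the chain of reductions witnessing $p\to_G 0$ must have positive length, so its first step has the form $p\to_{g_1} p_1$ with $g_1\in G$. By the definition of single-step reduction, this forces $uv_1=\lt{p}=a\lt{g_1}b$ for some $a,b\in\{a,b,c\}^*$. Since $\lt{g_1}\in\{abc,cab\}$, the word $uv_1$ contains $abc$ or $cab$ as a subword, so $uv_1\notin\Irr(G)$, which is the conclusion of the lemma.

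I do not expect a genuine obstacle here: once one notes that the degree-lexicographic order is compatible with left multiplication by $u$, the statement is essentially an unpacking of the definitions of reduction and irreducibility. The one point worth explicit verification is that the first reduction in the chain really does act on the global leading monomial $uv_1$ of $p$, which is immediate from the definition of $\to_{g_1}$.
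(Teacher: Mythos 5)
Your proposal is correct and rests on the same idea as the paper's proof: $uv_1$ is the (nonzero) leading term of $\sum_i\alpha_i uv_i$ because the degree-lexicographic order is compatible with left multiplication, and since every single-step reduction $\to_{g}$ acts only on the leading term, a nonzero polynomial whose leading term lies in $\Irr(G)$ cannot reduce to $0$. The paper states this as a one-line contradiction, while you unpack the first reduction step explicitly, but the argument is essentially identical.
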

\begin{proof}
  If $uv_1\in \Irr(G)$, then $uv_1\neq 0$ is the leading term of $\sum_{i=1}^n\alpha_iuv_i$,
  contradicting the assumption that $\sum_{i=1}^n\alpha_iuv_i\to_G0$.
\end{proof}

The following theorem gives a new proof for Jacobson's lemma.
\begin{theorem}\label{thm_main}
  System (\ref{system}) has a unique solution in $S$.
\end{theorem}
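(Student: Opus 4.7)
The theorem splits into existence and uniqueness, which I would treat separately.

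Uniqueness is immediate from elementary ring theory and needs no Gr\"obner--Shirshov input: if $x_1,x_2\in S$ both satisfy $(1-ba)x_i = x_i(1-ba) = 1$, then
\[
x_1 \;=\; x_1\cdot 1 \;=\; x_1(1-ba)x_2 \;=\; 1\cdot x_2 \;=\; x_2.
\]

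For existence I would produce the explicit candidate $x := 1 + bca$, suggested by Halmos's geometric series~(\ref{formula_Jacabson}) with $c$ playing the role of $(1-ab)^{-1}$, and verify both equations directly in $S$. The element $x$ is already in $\Irr(G)$-normal form, since neither $abc$ nor $cab$ occurs as a subword of $1$ or of $bca$. Expanding and applying $G$-reductions using $f = abc - c + 1$ and $g = cab - c + 1$ yields
\begin{align*}
(1-ba)(1+bca) &= 1 + bca - ba - b(abc)a \;\to_G\; 1 + bca - ba - b(c-1)a \;=\; 1,\\
(1+bca)(1-ba) &= 1 - ba + bca - b(cab)a \;\to_G\; 1 - ba + bca - b(c-1)a \;=\; 1,
\end{align*}
and Lemma~\ref{lemma-GSB-reduction} certifies that these identities hold inside $S$, not just in the free algebra $\ZZ\langle a,b,c\rangle$.

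The real content is the \emph{discovery} of the form $x = 1 + bca$, which is where the Gr\"obner--Shirshov viewpoint pays off. Starting from the naive trial $x_0 = 1$, the residue $(1-ba)x_0 - 1 = -ba$ is nonzero, so one seeks a monomial correction $m \in \Irr(G)$ with $m - bam \to_G ba$. Since $m$ is irreducible, Lemma~\ref{lemma_leading-reducible} forces $bam \notin \Irr(G)$; a short case analysis of which subwords $abc$ or $cab$ can newly appear on the boundary between $ba$ and $m$ shows that $m$ must begin with $bc$, and the minimal completion that produces the required residue $ba$ in a single reduction step is $m = bca$, yielding $b(abc)a \to_G b(c-1)a = bca - ba$. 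The only subtlety I expect is bookkeeping: one has to confirm carefully that each manipulation above is a legal $G$-reduction so that, via Lemma~\ref{lemma-GSB-reduction}, the displayed equalities live in $S$ and finish the existence proof.
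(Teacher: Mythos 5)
Your proof is correct, and your uniqueness argument and final verification coincide with the paper's: uniqueness is the standard uniqueness of inverses, and existence is settled by reducing $(1-ba)(1+bca)-1$ by $f=abc-c+1$ and $(1+bca)(1-ba)-1$ by $g=cab-c+1$ to $0$ and passing to $S$ (for this direction one only needs that a $G$-reduction changes an element by a member of the ideal, so even the full strength of Lemma~\ref{lemma-GSB-reduction} is not required). Where you genuinely diverge is in how the candidate $x=1+bca$ is obtained. You take it from Halmos's series (\ref{formula_Jacabson}) and verify; your subsequent ``discovery'' sketch (seeking a single monomial correction $m$ with $bam\to_G m-ba$, arguing $m$ must begin with $bc$, picking the ``minimal completion'' $bca$) is heuristic --- it presupposes the solution is $1$ plus one irreducible monomial and the case analysis is not carried out --- but it is also not needed, since the verification stands on its own. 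The paper instead makes the derivation itself rigorous: assuming a solution $x=\sum_{i=1}^m\alpha_iu_i$ in $G$-normal form exists, it applies leading-term arguments (in the spirit of Lemma~\ref{lemma_leading-reducible}) to $bax-x+1\to_G0$ and $xba-x+1\to_G0$ to force $u_1=bca$, then $m=2$, then $u_2=1$ and $\alpha_1=\alpha_2=1$, and only afterwards runs the same verification. Logically both routes prove Theorem~\ref{thm_main}; what yours gives up is exactly the point of the note --- a ``legal'' construction of the inverse that does not start from the formula one is trying to justify --- while what it buys is brevity and independence from the long normal-form case analysis.
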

\begin{proof}
(Uniqueness) It follows from the uniqueness of an inverse in a ring.

(Existence)
%\red{[need to be read again, change A to S]}
Suppose a solution $x\in S$ of the system exists and let $x=\sum_{i=1}^m\alpha_iu_i$ where $0\neq\alpha_i\in\ZZ$, $m\in \NN$ and all $u_i\in{\Irr(G)}$ with $u_1>u_2>\cdots>u_m$. Then it follows from $(1-ba)x=1$ and Lemma \ref{lemma-GSB-reduction} that
\begin{align*}
  bax-x+1=\sum_{i=1}^m \alpha_ibau_i-\sum_{i=1}^m\alpha_iu_i+1\to_G0.
\end{align*}
If $bau_1\in\Irr(G)$ then $bau_1\neq0$ is the leading term of $bax-x+1$,
contradicting the fact that $bax-x+1\to_G0$.
Thus $bau_1\not\in\Irr(G)$.
Hence $u_1=bcv$ for some $v\in \{a,b,c\}^*$,
where $\{a,b,c\}^*$ denotes the free monoid generated by $\{a,b,c\}$.
Similarly, the assumption $1-x(1-ba)=0$ in $S$ implies that $u_1=wca$ for some $w\in \{a,b,c\}^*$.

Now there are two cases for $u_1$: (i) $u_1=bca$ and (ii) $u_1=bczca$ for some $z\in \{a,b,c\}^*$.
We begin with Case (i).
Suppose $u_1=bca$.
First we notice that
\[
ba(\alpha_1bca)-\alpha_1bca+1\to_f \alpha_1b(c-1)a-\alpha_1bca+1=-\alpha_1ba+1,
\]
where $-\alpha_1ba+1$ is irreducible modulo $G$.
Thus $ba(\alpha_1bca)-\alpha_1bca+1\not\to_G 0$ and $x\neq \alpha_1 bca$.
Hence $m\geq 2$.
Now we may suppose $x=\alpha_1bca+\sum_{i=2}^m\alpha_iu_i$.
Then
\begin{align}
  bax-x+1&= \alpha_1babca+\sum_{i=2}^m\alpha_ibau_i-\alpha_1bca-\sum_{i=2}^m\alpha_iu_i+1\nonumber\\
  &\rightarrow_f \alpha_1b(c-1)a+\sum_{i=2}^m\alpha_ibau_i-\alpha_1bca-\sum_{i=2}^m\alpha_iu_i+1\nonumber\\
% & = \alpha_1bca-\alpha_1ba +\sum_{i=2}^m\alpha_ibau_i-\alpha_1bca-\sum_{i=2}^m\alpha_iu_i+1\nonumber\\
  &= \sum_{i=2}^m\alpha_ibau_i -\alpha_1ba -\sum_{i=2}^m\alpha_iu_i+1\label{eqn_m}.
\end{align}
We claim that $m=2$.
Otherwise, suppose $m\geq 3$.
Then $u_2>u_3\geq1$, $bau_2>bau_i$, $bau_2>ba$,
and $bau_2>u_j$ for all $i\geq 3$ and $j\geq 2$.
Namely, $bau_2$ is the leading monomial of (\ref{eqn_m}).
Thus the fact $bax-x+1\to_G0$ implies $bau_2\not\in\Irr(G)$.
So $u_2=bcu'$ for some $u'\in \{a,b,c\}^*$.
But $u_2<u_1=bca$.
Hence $u'=1$ and $u_2=bc$. % \red{(used condition $a<b$, $a<c$)}.
If $bau_i\not\in\Irr(G)$ for some $i\geq 3$, then by the same argument we have $u_i=bc$,
which contradicts that $u_2>u_i$.
Therefore $bau_i\in\Irr(G)$ for $i\geq 3$.
Thus we may reduce polynomial (\ref{eqn_m}) as follows
\begin{align*}
&\sum_{i=2}^m\alpha_ibau_i -\alpha_1ba -\sum_{i=2}^m\alpha_iu_i+1\\
=&\ \alpha_2babc+\sum_{i=3}^m\alpha_ibau_i -\alpha_1ba -\alpha_2bc-\sum_{i=3}^m\alpha_iu_i+1\\
\to_f&\ \alpha_2b(c-1)+\sum_{i=3}^m\alpha_ibau_i -\alpha_1ba -\alpha_2bc-\sum_{i=3}^m\alpha_iu_i+1\\
=&\  -\alpha_2b+\sum_{i=3}^m\alpha_ibau_i -\alpha_1ba -\sum_{i=3}^m\alpha_iu_i+1.
\end{align*}
The last polynomial, denoted by $h$, is irreducible and its leading term is
(note that if $u_3=1$ then $m=3$)
\[
\lt{h}=\begin{cases}
  bau_3 &\text{ if } u_3>1\\
  ba &\text{ if } u_3=1 \text{ and } \alpha_3\neq \alpha_1\\
  b &\text{ if } u_3=1 \text{ and }  \alpha_3= \alpha_1
\end{cases}.
\]
In each of these three cases, $\lt{h}\neq0$ and thus $h\neq 0$,
which contradicts the assumption that $bax-x+1\rightarrow_G 0$.
This proves our claim that $m=2$.

Now we may write $x=\alpha_1bca+\alpha_2u_2$ and from (\ref{eqn_m}) we have
\begin{align*}
  bax-x+1\to_f\alpha_2bau_2-\alpha_1ba-\alpha_2u_2+1.
\end{align*}
If $bau_2\not\in \Irr(G)$, then $u_2=bc$ as we proved in the last paragraph.
Then we have
\begin{align*}
  bax-x+1&\to_f\alpha_2babc-\alpha_1ba-\alpha_2bc+1\\
  &\to_f\alpha_2b(c-1)-\alpha_1ba-\alpha_2bc+1\\
&=-\alpha_2b-\alpha_1ba+1,
\end{align*}
which is irreducible modulo $G$ and nonzero, contradicting that $bax-x+1\to_G0$.
Therefore, $bau_2\in\Irr(G)$.
Now the only possibility in which $\alpha_2bau_2-\alpha_1ba-\alpha_2u_2+1\to_G0$ is that $u_2=1$ and
$\alpha_1=\alpha_2=1$.
That is, $x=bca+1$.

Now we check by reduction whether $x=bca+1$ is a solution of (\ref{system}).
\begin{align*}
  bax-x+1&=ba(bca+1)-(bca+1)+1\\
  &=babca+ba-bca\\
  &\to_f b(c-1)a+ba-bca\\
  &=0\\
\end{align*}
Similarly, we have
\begin{align*}
  xba-x+1&=(bca+1)ba-(bca+1)+1\\
  &=bcaba+ba-bca\\
  &\to_g b(c-1)a+ba-bca\\
  &=0\\
\end{align*}
Thus, $x=bca+1$ is a solution of (\ref{system}) in $S$.
This implies that $x$ is an inverse of $1-ba$ in $S$.
Since an inverse is unique if it exists in $S$,
we do not need to consider Case (ii) for $x$.
\end{proof}
It is clear that Theorem \ref{thm_main} gives a new proof for Jacobson's lemma.
%\red{True for other kind of inverses?}

\

{\bf Acknowledgements.} This work was motivated by a talk by R. Padmanabhan in the Rings and Modules Seminar at the University of Manitoba.
%%%%%%%%%%%%%%%%%%%%%%%%%%%%%%%%%%%%%%%%%%%%%%%
%\bibliographystyle{plain}
%\bibliography{XZ}

\begin{thebibliography}{10}

\bibitem{bokut2014grobner}
L.~A. Bokut and Yuqun Chen.
\newblock Gr{\"o}bner-{S}hirshov bases and their calculation.
\newblock {\em Bulletin of Mathematical Sciences}, 4(3):325--395, 2014.

\bibitem{castro2010generalized}
N.~Castro-Gonz{\'a}lez, C.~Mendes-Ara{\'u}jo, and P.~Patricio.
\newblock Generalized inverses of a sum in rings.
\newblock {\em Bulletin of the Australian Mathematical Society},
  82(01):156--164, 2010.

\bibitem{chen2010unit}
H.~Y. Chen.
\newblock Unit-regularity and stable range one.
\newblock {\em Bull. Korean Math. Soc}, 47(3):653--661, 2010.

\bibitem{corach2013extensions}
G.~Corach, B.~Duggal, and R.~Harte.
\newblock Extensions of {J}acobson's lemma.
\newblock {\em Communications in Algebra}, 41(2):520--531, 2013.

\bibitem{cox2005using}
D.~A. Cox, J.~Little, and D.~O'shea.
\newblock {\em Using algebraic geometry}, volume 185.
\newblock Springer, 2005.

\bibitem{cvetkovic2010jacobson}
D.~Cvetkovic-Ilic and R.~Harte.
\newblock On {J}acobson's lemma and {D}razin invertibility.
\newblock {\em Applied Mathematics Letters}, 23(4):417--420, 2010.

\bibitem{halmos1981does}
P.~R. Halmos.
\newblock Does mathematics have elements?
\newblock {\em The mathematical intelligencer}, 3(4):147--153, 1981.

\bibitem{herfort2014grobner}
W.~Herfort.
\newblock A {G}r{\"o}bner-{S}hirshov basis approach to {H}ua's identity.
\newblock {\em Beitr{\"a}ge zur Algebra und Geometrie/Contributions to Algebra
  and Geometry}, 55(2):387--391, 2014.

\bibitem{lam2014jacobson}
T.Y. Lam and P.P. Nielsen.
\newblock Jacobson's lemma for {D}razin inverses.
\newblock {\em Ring Theory and Its Applications}, 609:185--196, 2014.

\bibitem{zhangzhao2013Modules}
Xiangui Zhao and Yang Zhang.
\newblock {G}elfand-{K}irillov dimension of modules over differential
  difference algebras.
\newblock {\em Algebra Colloq.}, 23(4):701--720, 2016.

\bibitem{zhuang2012jacobson}
G.F. Zhuang, J.L. Chen, and J.~Cui.
\newblock Jacobson's lemma for the generalized {D}razin inverse.
\newblock {\em Linear Algebra and its Applications}, 436(3):742--746, 2012.

\end{thebibliography}

\end{document}